\documentclass[11pt]{article}

\usepackage{amsmath, amssymb, amsthm, enumerate}
\usepackage{graphicx, caption, subcaption}
\usepackage{xcolor}
\usepackage{cancel} %strikethrough text via cancel command
\usepackage[pagebackref]{hyperref}

\usepackage{xspace}

\usepackage[margin=1.5in, marginparwidth=1.3in]{geometry}

\usepackage{titlesec}
\theoremstyle{plain}
\newtheorem{thm}{Theorem}[section]
\newtheorem{prop}[thm]{Proposition}
\newtheorem{lemma}[thm]{Lemma}

\theoremstyle{definition}
\newtheorem{defn}[thm]{Definition}

\newtheorem{question}[thm]{Question}

\newtheorem{rmkx}[thm]{Remark}
\newenvironment{rmk}
{\pushQED{\qed}\rmkx}
{\popQED\endrmkx}

\newcommand{\Z}{\mathbb{Z}}
\newcommand{\R}{\mathbb{R}}

\newcommand{\osc}{\operatorname{osc}}

\newcounter{bencomments}

\title{Unbounded bunching of saddle connections on the golden~L }  
\author{Benjamin Dozier \thanks{Department of Mathematics, Cornell University, \href{mailto:benjamin.dozier@cornell.edu}{\nolinkurl{benjamin.dozier@cornell.edu}}. Research supported in part by NSF Grant DMS-2247244 and the Simons Foundation.
}}

\begin{document}
\maketitle

% % ABSTRACT
\begin{abstract}
  We show there is a translation surface (the golden L) that has \emph{unbounded bunching}:  for every positive integer $K$ there exists a ball $B$ of radius $1$ in $\R^2$ that contains at least $K$ vectors that are periods of saddle connections on this surface.  
\end{abstract}

% TABLE OF CONTENTS
%\setcounter{tocdepth}{1}  % set depth shown in table of contents
%\tableofcontents
%\setcounter{tocdepth}{3} % set depth shown in sidebar contents

% -Cite Chenxi paper

% -Cite Barak's student's paper

% -Lelievre conjecture 

\section{Introduction}
\label{sec:intro}
A \emph{translation surface} is a pair $(X,\omega)$, where $X$ is a compact Riemann surface and $\omega$ is a holomorphic $1$-form on $X$.  Equivalently, a translation surface is a finite collection of disjoint polygons in the plane, with each edge paired with a parallel edge of the same length, such that paired edges lie on opposite sides of the polygons they bound.  A \emph{saddle connection} on $(X,\omega)$ is a geodesic segment that starts and ends at a zero of $\omega$, with no zeros on the interior of the segment.  Integrating the form $\omega$ over a saddle connection gives a complex number, its \emph{period}.  It records the length and direction of the saddle connection.  
% , up to a cut-and-paste equivalence relation.  

\begin{defn} 
We say  a translation surface $(X,\omega)$ has \emph{unbounded bunching} if for every positive integer $K$ there exists a ball $B$ of radius $1$ in $\R^2$ that contains at least $K$ vectors that are periods of saddle connections on $(X,\omega)$.  
\end{defn}

\begin{question} \label{q:bunch} 
  Does there exist a translation surface with unbounded bunching?  
\end{question}

\begin{figure}
    \centering
    \includegraphics[width=0.4\linewidth]{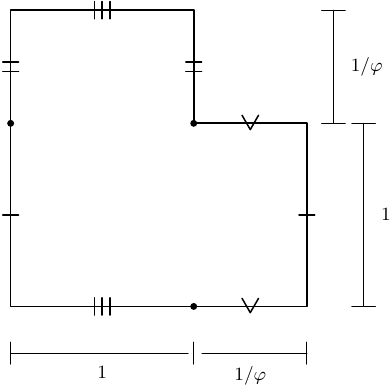}
    \caption{The golden L translation surface, presented as a polygon with edge identifications.  The resulting differential has a single zero of order 2.}
    \label{fig:goldenL}
  \end{figure}
  
Let $\phi := \frac{1+\sqrt{5}}{2}$ be the golden ratio, which satisfies the quadratic equation $\phi^2-\phi-1=0$.
The \emph{golden L} translation surface is the one given in Figure \ref{fig:goldenL}.  

In this note, we answer Question \ref{q:bunch} in the affirmative.  

\begin{thm} \label{thm:goldenLbunch}
  The golden L has unbounded bunching.  
\end{thm}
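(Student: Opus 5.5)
The plan is to exhibit, for each $K$, an explicit family of $K$ saddle connections on the golden L whose holonomies all have the \emph{same} vertical component and whose horizontal components lie in a single interval of length at most $1$. I will use the horizontal cylinder decomposition: the golden L splits horizontally into two cylinders $C_1,C_2$ with circumferences $c_1=\phi$ and $c_2=1$ (up to the normalization in Figure~\ref{fig:goldenL}) and heights $h_1,h_2$. The key arithmetic input is that $c_1/c_2=\phi$ is irrational, so the set $\{k c_1+l c_2 : k,l\in\Z\}$ is dense in $\R$.

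\textbf{Step 1: parametrize straight segments crossing both cylinders.} A segment that starts at the zero on the bottom of $C_1$, crosses $C_1$, passes through the interior of the common boundary segment between $C_1$ and $C_2$, crosses $C_2$, and ends at the zero on the top of $C_2$ has holonomy $(x,H)$ with $H=h_1+h_2$. Unrolling each cylinder shows that the possible $x$ have the form $x=a+kc_1+lc_2$ with $k,l\in\Z$ and $a$ a fixed offset determined by the positions of the zero on the boundaries. For such an $x$ to give a genuine saddle connection, two conditions must hold.
\begin{itemize}
\item[(i)] Consistency: the twist in $C_1$ is $kc_1$ plus the horizontal distance travelled in $C_1$, and that distance is $xh_1/H$ because the segment is straight.
\item[(ii)] Admissibility: the crossing point on the interface must lie in the open sub-interval along which $C_1$ and $C_2$ are glued, rather than at the zero or on a part of the boundary of $C_1$ glued to itself.
\end{itemize}
I will write these conditions as: $x\in a+c_1\Z+c_2\Z$ and $xh_1/H \bmod c_1$ lies in a fixed open interval $I$. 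Condition (i) is automatically encoded in the unrolled picture.

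\textbf{Step 2: find $K$ admissible $x$ in a unit interval.} I need many pairs $(k,l)$ with $a+kc_1+lc_2$ inside a fixed unit interval $[X,X+1]$ that also satisfy the admissibility condition. Because $\phi$ is irrational, for large $X$ the number of points of $a+c_1\Z+c_2\Z$ in $[X,X+1]$ is unbounded only if $k,l$ range over large values of opposite sign. The admissibility condition should then behave like an equidistributed condition on $(k,l)$: by Weyl equidistribution of $(k,l)\mapsto (kc_1+lc_2)h_1/H \bmod c_1$, a positive proportion of the near-cancelling pairs land in $I$. This would give at least $K$ admissible points. These points lie on a single horizontal line at height $H$, so any $K$ of them within distance $1$ of each other fit into a ball of radius $1$.

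\textbf{Main obstacle.} I expect Step 2 to be the hardest part, namely verifying that straightness does not force $k$ and $l$ to be tied together. The worry is that the admissibility condition might pin $x$ modulo something and destroy the density from $\phi$. If the two-cylinder count turns out to be too rigid, my fallback is to let the segment wind through a longer alternating chain of cylinders $C_1,C_2,C_1,\dots$, or to work in a non-horizontal periodic direction. In either setting there are more independent integer twist parameters, so the set of admissible horizontal components is a translate of a rank-two subgroup intersected with a Bohr-type set, which should still have unbounded local density. A different tool would be the Veech group, since $\mathrm{SL}(X,\omega)$ preserves the set of periods. Applying an affine map sends any bunch to a bunch whose diameter is distorted by a bounded factor. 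I would use this only to move a bunch found far away into a convenient position, and not to create bunching: differences lying in a single rational direction cannot be shrunk, because the orbit of a cusp vector is discrete.
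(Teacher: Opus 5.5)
\textbf{There is a genuine gap: Step 2 fails, and the failure is structural rather than a detail.}

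You treat admissibility as an equidistributed condition independent of the near-cancellation $kc_1+lc_2\approx X-a$. Straightness couples them. Admissibility says the crossing point $p_0+xh_1/H-kc_1$ lies in the glued interval $J$, so $xh_1/H-kc_1$ is bounded. Substituting $x=a+kc_1+lc_2$, this says $lc_2h_1-kc_1h_2$ lies in a fixed interval of length $H|J|$. Hence $(k,l)$ lies in the preimage of a bounded box under the linear map $(k,l)\mapsto(kc_1+lc_2,\ lc_2h_1-kc_1h_2)$, whose determinant is $c_1c_2H\neq 0$. Such a preimage contains a number of lattice points bounded independently of $X$.

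Concretely, take $c_1=\phi$, $h_1=1$, $c_2=1$, $h_2=1/\phi$. The second form is $l-k$, which is confined to an open interval of length $\phi$. Also $x=a+(l-k)+k\phi^2$. So for each choice of starting and ending points on the cylinder boundaries, there are at most two admissible $x$ in any unit interval.

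Geometrically, a straight segment twists in the same direction in both cylinders, by $xh_1/H$ and $xh_2/H$. So $k$ and $l$ are determined by $x$ up to $O(1)$. The ``large values of opposite sign'' you rely on are exactly what straightness forbids. Longer chains of cylinders do not help, for the same reason: each twist is pinned by the slope, so no free integer parameters appear.

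\textbf{No version of your strategy that keeps all $K$ periods on one horizontal line $y=H$, with $H$ independent of $K$, can work.} Since $U$ lies in the Veech group and $U(x,H)=(x+\phi H,H)$, the set $S_H=\{x:(x,H)\text{ is a saddle period}\}$ is a discrete subset of $\R$ invariant under translation by $\phi H$. If $N_H$ is the number of its points in $[0,\phi H)$, then every unit interval contains at most $N_H(2+1/(\phi H))$ of them. Equivalently, by the no-small-virtual-triangles property of lattice surfaces, $H|x_i-x_j|=|v_i\wedge v_j|$ is bounded below. The same holds for any other parabolic direction.

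\textbf{The missing idea is that the bunch must use periods of \emph{different} heights.} The paper's proof takes the vectors $L^{n_r}U^rLe_1=(\alpha_r,\phi(1+n_r\alpha_r))$ for $r\in\{R,\dots,R+K-1\}$. It chooses the $n_r$ by a simultaneous Diophantine approximation:
\begin{itemize}
\item write $1/\alpha_r=a_r+b_r\rho$ with $a_r,b_r$ rational;
\item choose $C=(1+R\rho')^2$, so that the $K$ congruences reduce to a single one, up to an error of small oscillation;
\item apply Kronecker's theorem to $t\rho$.
\end{itemize}
With these $n_r$, the vectors lie within a tiny vertical error of an arithmetic progression with step $(\rho,-\phi/M)$. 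This step points along the parabolic direction $U^{-M}(0,1)$. Then $U^M$ sends the step to $(0,-\phi/M)$ and collapses the progression into a ball of radius $1$.

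This also corrects your closing remark that the Veech group cannot create bunching. It can shrink differences along a cusp direction when they are small compared with the saddle connections in that direction. That is the case here, since the cusp direction $U^{-M}(0,1)$ varies with $K$ and $M>16\rho K$.
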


The proof that we give is elementary, using only basic properties of quadratic irrationals.  It is somewhat intricate since one ends up considering products of powers of 4 matrices.  

\paragraph{Relation to prior work.}
Saddle connections and cylinders on translation surfaces have been of great interest for many decades.  Masur proved coarse upper and lower quadratic bounds on the growth of saddle connections and cylinders of length at most $R$ \cite{masur88, masur90}.  Our initial interest in Question \ref{q:bunch} was that if no surface had unbounded bunching, this would have provided a different explanation for the coarse quadratic upper bound, and would have also implied coarse upper bounds on saddle connections with periods lying in other shapes, such as strips of fixed height and growing width.

Wu proved that on non-arithmetic Veech surfaces one can find arbitrarily small balls that contain at least two saddle periods \cite{wu16}.  However, the argument does not readily generalize to give more than a fixed constant number of saddle connections (related to the maximal number of cylinders in a fixed direction) in balls of fixed radius.

Samuel Leli\`evre conjectured a stronger version of Theorem \ref{thm:goldenLbunch}, namely that for any $K$ and $\epsilon>0$ one can find a \emph{segment} of length $\epsilon$ that contains at least $K$ saddle periods for the golden L.  % see email Jul 8, 2018
Bashan has made progress towards this conjecture, showing the $K=3$ case \cite{bashan24}.  To our knowledge, the general case remains open.

\paragraph{AI use and formalization.}  The proof was found via extensive use of OpenAI's ChatGPT 5.5 Pro.  The exposition presented here is entirely human-written.  A formal statement of the key new result in the proof, Proposition \ref{prop:bunch}, was written in the Lean language and carefully checked (the deduction of our main theorem from this proposition is routine and short, but formalizing this deduction in Lean would involve formalizing many concepts that are not currently in Mathlib).  Using OpenAI's Codex, a formal Lean proof of this proposition was produced, which successfully compiles.  Thus Proposition \ref{prop:bunch} can be considered ``computer verified''.   See the attached ancillary files:
\begin{itemize}
\item \verb+GoldenLUnboundedBunchMathlibStatement.lean+ contains the formalization of the statement of Proposition \ref{prop:bunch},
\item \verb+GoldenLUnboundedBunchMathlib.lean+ contains both the formal statement and proof.  
\end{itemize}

\paragraph{Acknowledgements.}

We thank Joseph Helfer for help with Lean.

\section{Proof of Theorem \ref{thm:goldenLbunch}}
\label{sec:proof}

Define 
\[
  U := \begin{pmatrix} 1 & \phi \\ 0 & 1 \end{pmatrix}, \quad
  L := \begin{pmatrix} 1 & 0 \\ \phi & 1 \end{pmatrix}, \quad
  e_1 := \begin{pmatrix} 1 \\ 0 \end{pmatrix}.
\]

Our main theorem will follow easily once we have established the following proposition concerning the above matrices acting on $\R^2$.

\begin{prop} \label{prop:bunch} 
Let $\Gamma$ be the group $\langle U,L\rangle$.  Then for any positive integer $K$, there exist $\gamma_0,\ldots,\gamma_{K-1}  \in \Gamma$ and a ball $B\subset \R^2$ of radius $1$ such that $\gamma_0 e_1 , \ldots, \gamma_{K-1} e_1$ all lie in $B$ and are distinct.    
%$\gamma_0,\ldots,\gamma_{K-1} \in \Gamma$ and a ball $B\subset \R^2$ of radius $1$, such that $\gamma_i e_1 \in B$ for $i=1,\ldots,K$, and $\gamma_i e_1 \ne \gamma_j e_1$ for $i\ne j$.  
\end{prop}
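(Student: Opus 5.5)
The plan is to exploit parabolic elements of $\Gamma$ as ``shears'' that move an orbit point by a small amount. The basic observation is that $U v = v + \phi\det(e_1,v)\,e_1$ for any $v\in\R^2$. Hence for $g\in\Gamma$ the conjugate parabolic $P_g := gUg^{-1}\in\Gamma$ acts by
\[
  P_g^k v = v + k\phi\,\det(g e_1, v)\, g e_1 .
\]
So if we can find $g\in\Gamma$ and an orbit point $v=\gamma e_1$ with
\[
  0<\phi\,|\det(g e_1,v)|\,|g e_1| < \tfrac{1}{K},
\]
then the $K$ points $P_g^k v$, $k=0,\dots,K-1$, are distinct orbit points. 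They all lie on a segment of length less than $1$ starting at $v$, hence in a ball of radius $1$, which proves Proposition \ref{prop:bunch}. Writing $u:=g^{-1}v$, we have $\det(g e_1,v)=\det(e_1,u)=u_2$, the second coordinate of $u$. So the task is: find an orbit point $u$ and a $g\in\Gamma$ with $0<\phi\,|u_2|\,|g e_1|<1/K$.

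Taking $g=\mathrm{id}$ would need orbit points with arbitrarily small nonzero second coordinate. I expect this to fail, since Shimizu-type discreteness should bound the lower-left entries of elements of $\Gamma$ away from $0$. So the smallness must come from a genuine interplay between $g$ and $u$, and I will use the arithmetic of $\Z[\phi]$. All entries of elements of $\Gamma$ lie in $\Z[\phi]$, and Galois conjugation $\sigma:\phi\mapsto -1/\phi$ maps $\Gamma$ to the group generated by shears with parameter $-1/\phi$. That group is non-discrete, and this is where room to make quantities small should appear. Concretely, I would choose a hyperbolic element, e.g.\ $M=UL$ (trace $2+\phi^2$, eigenvalues $\lambda^{\pm1}$ with $\lambda>1$), together with a second element built from $U,L$. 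I would then look for words
\[
  g = M^{n}A,\qquad u = M^{-m}B\,e_1 ,
\]
with $A,B$ fixed short words in $U,L$. In such words $|g e_1|\asymp\lambda^{n}$, while $u_2$ is a combination $\alpha\lambda^{-m}+\beta\lambda^{m}$. The goal is to arrange the cancellation $\beta=0$, or more realistically $\beta$ exponentially small. This should be done through an exact identity in $\Z[\phi]$: $u$ lies on (or is extremely close to) the contracting eigenline of $M$, up to a correction controlled by a second hyperbolic or parabolic factor. Taking $m$ large relative to $n$ would then make $|u_2|\,|ge_1|\lesssim\lambda^{n-m}$, which can be made smaller than $1/(\phi K)$. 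Nonvanishing of $u_2$ follows from irrationality: $u_2=0$ would force a nontrivial relation among powers of $\lambda$ in $\Q(\sqrt5)$ that contradicts $|\sigma(\lambda)|\neq\lambda$.

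The main obstacle is the middle step: producing explicit orbit points whose second coordinate is small \emph{relative to} $1/|g e_1|$. An orbit point cannot lie exactly on an irrational eigenline of a hyperbolic element, so a single power of $M$ will not produce the required cancellation. I expect to need a product of powers of several matrices, something like $U^{a}L^{b}U^{c}L^{d}$ with $a,b,c,d$ growing together. The exponents would be tuned via the recursions satisfied by the entries of $U^aL^b$, which are linear in $a,b$ with coefficients in $\Z[\phi]$. The aim is to make $u_2$ telescope into a difference of two nearly equal large quantities, whose difference is controlled using $\phi^2=\phi+1$ and the fact that $\phi^{n}-F_n\phi-F_{n-1}=0$. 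I would first compute a few small cases symbolically to guess the right family of words, then prove the resulting closed-form estimate on $|u_2|\,|g e_1|$ by induction on the exponents. Distinctness of the $K$ points is automatic once $u_2\neq0$.
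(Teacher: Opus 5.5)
\textbf{There is a genuine gap, and it is fatal to this route.} The inequality you reduce to, $0<\phi|u_2|\,|ge_1|<1/K$, has no solutions once $K\ge 2$. You correctly expect Shimizu's lemma to rule out $g=\mathrm{id}$, but the same bound applies to every $g$.

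\emph{The factor $|u_2|$ cannot be small.} We have $u_2=\det(ge_1,\gamma e_1)=\det(e_1,g^{-1}\gamma e_1)$, which is the lower-left entry of $g^{-1}\gamma\in\Gamma$. Since $\Gamma$ is discrete (it lies in the Veech group) and contains $U$, Shimizu gives $|u_2|\ge 1/\phi$ whenever $u_2\neq0$.

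\emph{The factor $|ge_1|$ cannot be small either.} If the lower-left entry of $g$ is nonzero, it is at least $1/\phi$ in absolute value. Otherwise $g$ is upper triangular, and conjugating $U$ by powers of $g$ together with discreteness forces $ge_1=\pm e_1$. Either way $|ge_1|\ge 1/\phi$.

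Hence $\phi|u_2|\,|ge_1|\ge 1/\phi>1/2$ for every admissible choice. Equivalently, any two non-parallel orbit points $p,q\in\Gamma e_1$ satisfy $|\det(p,q)|\ge1/\phi$. So the step $\phi\det(ge_1,v)\,ge_1$ of any progression $P_g^k v$ has length at least $1/\phi$, and such a progression meets a ball of radius $1$ in at most $4$ points. The cancellation you plan to engineer (making $\beta$ exponentially small so that $u_2\approx\alpha\lambda^{-m}$) is exactly what discreteness forbids. The non-discreteness of the Galois-conjugate group cannot help, because the obstruction lives in the real embedding. Iterating a single parabolic element can only ever give bounded bunching.

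\textbf{What is missing.} The $K$ points must come from $K$ genuinely different words, and they must lie far from the origin. Since $|\det(p,q)|\le |p|\,|q-p|$, pairwise determinants $\ge 1/\phi$ are compatible with pairwise distances $<1$ only when $|p|$ is large.

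The paper takes $\gamma_i=U^ML^{n_r}U^rL$ with $r=R+i$ ranging over a block of $K$ consecutive integers. Then
\[
\gamma_ie_1=\bigl(\alpha_r+M\rho(1+n_r\alpha_r),\ \phi(1+n_r\alpha_r)\bigr).
\]
It chooses the $n_r$ so that $n_r\alpha_r=A-r/M+E_r/M$ with $E_r$ of small oscillation. This makes the $x$-coordinate $1+M\rho(1+A)+\rho E_r$, which is nearly constant in $r$.

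The heart of the argument is solving these $K$ approximate congruences simultaneously:
\begin{itemize}
\item[(i)] Write $\theta_r=1/\alpha_r=a_r+b_r\rho$ with $a_r,b_r\in\mathbb{Q}$.
\item[(ii)] Take $A=Qt$ to kill the rational parts.
\item[(iii)] Choose $C=(1+R\rho')^2$ so that $-Cb_r=\theta_r(r-\tilde E_r)$ with $\tilde E_r$ of oscillation $O(K^2/R)$ on the block.
\item[(iv)] All $K$ conditions then collapse to the single condition $t\rho+C/(QM)\approx0 \pmod 1$, which Kronecker's theorem supplies.
\end{itemize}
Distinctness comes from the distinct values $\alpha_r=1+r\rho$.

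Your closing idea of words $U^aL^bU^cL^d$ with growing exponents is close to the right family. It has to be used to produce the $K$ bunched points directly, not to make $u_2$ small.
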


\begin{rmk}
  In fact the above proposition and the proof we give below can be modified to work for any $\phi>0$ with $\phi^2$ a quadratic irrational.
  %This was verified using ChatGPT, and a formal proof was also produced.  
  
\end{rmk}
Before beginning the proof of the proposition, we set up some notation.  

\begin{itemize}
\item Define $\rho:=\phi^2=\phi+1 = \frac{3+\sqrt 5}{2}$.  Define the Galois conjugate $\rho':= \frac{3-\sqrt 5}{2}$.  Note that $\rho+\rho'=3$ and $\rho\rho'=1$.

\item Define $\alpha_r := 1 + r\rho$.
  
\item 
  Given a family of real numbers $(d_r)_{r\in I}$, define the \emph{oscillation}
\begin{align*}
  \osc_I d_r := \sup_{r,s\in I} |d_r-d_s|.
\end{align*}
  
\end{itemize}

\begin{proof}[Proof of Proposition \ref{prop:bunch}]
  It will suffice to consider group elements of the form
  \begin{align}
    U^m L^nU^r Le_1 &= U^m \begin{pmatrix} \alpha_r\\ \phi(1+n\alpha_r) \end{pmatrix} \label{eq:Um} \\
                    & =\begin{pmatrix} \alpha_r + m\rho(1+n\alpha_r) \\ \phi(1+n\alpha_r) \end{pmatrix}. \label{eq:matrix-prod}
  \end{align}

  The idea is to take $m$ some large fixed number, and then choose $K$ different values of $r$ and $n$.  We can pick out the terms in the $x$-coordinate above that depend on $r$ or $n$; summing them gives $\alpha_r + m\rho n\alpha_r$.  We want to choose $r$ varying in a block of consecutive integers of size $K$, and then choose $n_r$ depending on $r$,  so that this sum is approximately constant.  Lemma \ref{lemma:dioph-approx} below accomplishes this, as we now see.   For the $R,M,A,n_r$ given by that lemma, we get using \eqref{eq:nalpha} that the $x$-coordinate of \eqref{eq:matrix-prod} is
  \begin{align}
    \label{eq:x-coord}
    (1+r\rho) + M\rho(1+A-r/M + E_r/M)  = 1+ M\rho(1+A) +\rho E_r,
  \end{align}
  for all $r\in \{R,R+1,\ldots, R+K-1\} $, and since $\osc_I E_r < \frac{1}{8\rho}$, we get that the oscillation of the above is less than $1/8$. 

  Now we consider the $y$-coordinate of  \eqref{eq:matrix-prod}, which Lemma \ref{lemma:dioph-approx}  has also been crafted to handle.  This coordinate becomes 
  \begin{align}
    \label{eq:y-coord}
    \phi(1+A-r/M + E_r/M),
  \end{align}
  and since $\osc_I (-r/M + E_r/M) < \frac{1}{8\rho} < \frac{1}{8\phi}$, we get that the oscillation of the above is less than $1/8$.  

    Now define $\gamma_i  := U^M L^{n_{R+i}} U^{R+i} L$, for $i=0,\ldots, K-1$.  Take the ball $B$ to be centered at $\gamma_0e_1$ (in fact any $\gamma_ie_1$ would work).  Combining the oscillation estimates for $x,y$ coordinates in  \eqref{eq:x-coord} and \eqref{eq:y-coord} respectively, we get that all the $\gamma_ie_1$ lie in this $B$, as desired.  

    It remains to show that $\gamma_0e_1,\ldots, \gamma_{K-1}e_1$ are distinct.   Suppose the contrary.  Since they correspond to products with the same $m=M$ in the left-hand side of \eqref{eq:Um}, and $U^m$ is invertible, we see from the $x$-coordinate of the right-hand side of that formula that we would have $\alpha_r =\alpha_s$ for some $s\ne r$.  But this is a contradiction, since $\alpha_r = 1+r\rho$.  Thus they are distinct, and we are done.

\end{proof}

\begin{lemma}\label{lemma:dioph-approx}
  Given $K$ a positive integer, there exist integers $R,M,A$ and integers $n_r$ for each $r\in I$, where  $I:=\{R,R+1,\ldots, R+K-1\}$, such that the following holds.  Let $E_r$ be defined to satisfy
  \begin{align}
    n_r\alpha_r = A-\frac r M + \frac{E_r}{M}. \label{eq:nalpha}
  \end{align}
Then
  \begin{align} 
    &\osc_I E_r < \frac{1}{8\rho}, \label{eq:osc-xcoord} \\ 
    &\osc_I \left(-\frac r M + \frac{ E_r}{M}\right) < \frac{1}{8\rho}. \label{eq:osc-ycoord} 
  \end{align}
\end{lemma}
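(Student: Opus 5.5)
The plan is to first rewrite \eqref{eq:nalpha} as a condition on the single real number $A$ and the numbers $1/\alpha_r$. The identity I would lean on throughout is
\[
  r=\frac{\alpha_r-1}{\rho}\qquad\text{so}\qquad \frac rM=\frac{\alpha_r}{\rho M}-\frac{1}{\rho M}.
\]
Substituting this, \eqref{eq:nalpha} becomes
\[
  \Bigl(n_r+\frac{1}{\rho M}\Bigr)\alpha_r \;=\; A+\frac{1+\rho E_r}{\rho M}.
\]
Conversely, given integers $A,M,n_r$, define $E_r$ by this equation.

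Condition \eqref{eq:osc-xcoord} then asks that $\bigl(n_r+\frac{1}{\rho M}\bigr)\alpha_r-A$ agree for all $r\in I$ up to an error of size $\frac{1}{8\rho^2 M}$. Dividing by $\alpha_r\approx r\rho$, this says that for every $r\in I$,
\[
  \frac{A}{\alpha_r}\equiv -\frac{1}{\rho M}\pmod 1
\]
up to an error $\ll 1/(M\alpha_r)$, where $n_r$ is the nearest integer. Condition \eqref{eq:osc-ycoord} is then automatic once $M$ is large compared to $K$. Indeed, $\osc_I(-r/M)=(K-1)/M$ and $\osc_I(E_r/M)\le 1/(8\rho M)$, so it suffices to take $M>8\rho K$.

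To control $A/\alpha_r$ modulo $1$, I would use the norm $N_r:=\alpha_r\alpha_r'=1+3r+r^2\in\Z$, where $\alpha_r'=1+r\rho'$. This gives $1/\alpha_r=(1+r\rho')/N_r$. Let $N:=\prod_{r\in I}N_r$ and take $A=NB$ with $B$ an integer. Then
\[
  \frac{A}{\alpha_r}\equiv c_r\,B\rho' \pmod 1,\qquad c_r:=\frac{rN}{N_r}\in\Z .
\]
Since $\rho'$ is irrational, $B\rho'$ modulo $1$ can be placed as close as we like to any prescribed $\theta$. Concretely, I would take $B$ to be a suitable integer combination of the Fibonacci numbers $F_{2k}$, using that $F_{2k}\rho'$ is exponentially close to an integer. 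The problem is thereby reduced to finding $\theta$ and $M$ such that $c_r\theta\equiv-\frac{1}{\rho M}\pmod 1$ holds simultaneously for all $r\in I$, with precision $o(1/(MR))$.

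The main obstacle is this simultaneous congruence. The integers $c_r$ are different for different $r$, so a tiny $\theta$ will not work. My first attempt would be to use the remaining freedom: choose $R$, and possibly replace $N$ by another common multiple of the $N_r$, so that all $c_r$ are congruent modulo a large integer $Q$. One can then take $\theta=j/Q+\eta$ with $\eta$ tiny and use $c_r j/Q$ to absorb the discrepancy, choosing $M$ afterwards to match the common residue. If this rigidity cannot be arranged, the fallback is to drop the restriction $A\in N\Z$. I would instead run a two-dimensional lattice argument in $\Z[\rho]$, approximating the target $A-\frac rM$ directly by elements $n\alpha_r$, taking $R\gg K^2$ and $M\gg K$ so that the quadratic correction terms in $r/\alpha_r=\frac1\rho-\frac{1}{\rho\alpha_r}$ are negligible. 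In either route the final step is the same bookkeeping: verify \eqref{eq:osc-xcoord} from the approximation precision, and \eqref{eq:osc-ycoord} from $M>8\rho K$.
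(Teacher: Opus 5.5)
Your proposal has a genuine gap, and it sits in the step you yourself flag as the main obstacle. You need one integer $A$ with $A/\alpha_r\equiv\frac{1}{\rho M}\pmod 1$, to precision $o(1/(MR))$, simultaneously for all $r\in I$. (Two small slips: the sign there is $+$, not $-$; and the oscillation of $(n_r+\frac{1}{\rho M})\alpha_r-A$ allowed by \eqref{eq:osc-xcoord} is $\frac{1}{8\rho M}$, not $\frac{1}{8\rho^2M}$.)

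With $A=NB$, which is exactly the paper's choice $A=Qt$, the vectors $(A/\alpha_r)_{r\in I}$ mod $1$ lie on, and are dense in, the closed curve $\{(c_r\theta)_{r\in I}:\theta\in\R\}$. Your target is the diagonal point $(\frac{1}{\rho M},\dots,\frac{1}{\rho M})$. It is not on this curve, and nothing in the proposal shows it comes within $o(1/(MR))$ of it.

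Your first attempt cannot supply this. Suppose $jc_r/Q$ mod $1$ is independent of $r\in I$ and $K\ge 3$. Use three facts: $\gcd(r,N_r)=1$; the $N_r$ are odd, so no prime divides $N_R,N_{R+1},N_{R+2}$; and $rN_{r'}-r'N_r=(r'-r)(rr'-1)$. Together they show that every prime power dividing the reduced denominator of the common value divides both $R(R+1)-1$ and $(R+1)(R+2)-1$. These have $\gcd\bigl(R(R+1)-1,(R+1)(R+2)-1\bigr)=1$. So the common residue is $0$, at distance $\frac{1}{\rho M}\gg\frac{1}{MR}$ from the target, whatever $R$, $Q$, $j$ and common multiple $N$ you choose. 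The fallback is not yet an argument. The correction $\frac{1}{\rho\alpha_r}$ in $r/\alpha_r$ is harmless; the real difficulty, making a single $A$ work for all $r$, is not addressed.

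The freedom you need was thrown away in your first reduction. The lemma only asks that the numbers $(n_r+\frac{1}{\rho M})\alpha_r-A$ agree with each other, not that they be $O(1/M)$. Requiring the latter forces $E_r$ itself to be bounded, whereas only $\osc_I E_r$ matters. Since $E_r=r+M(n_r\alpha_r-A)$, shifting $A$ by an integer $x$ shifts every $E_r$ by $-Mx$. So once you restrict to $A\in N\Z$, you can only move the common value of $E_r$ in steps of $MN$.

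The paper spends exactly this freedom. It aims at $E_r\approx\tilde E_r:=r-\frac{Cr}{1+r\rho'}$ with $C=(1+R\rho')^2$. This is huge ($\tilde E_R=-R^2\rho'$) but stationary at $r=R$, so $\osc_I\tilde E_r\le 100K^2/R$ once $R\gg K^2$. For this choice the target becomes $\frac{\theta_r}{M}(r-\tilde E_r)=-\frac{Cb_r}{M}=c_r\cdot\frac{C}{MN}$, with $b_r$ as in Lemma~\ref{lem:quad-arith}, and this is a point on your curve. All $K$ congruences then collapse to one requirement: $B\rho'-\frac{C}{MN}$ must be within a tiny distance of an integer (in the paper, $t\rho+\frac{C}{QM}$ near some $\ell\in\Z$). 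Density of $B\rho'$ modulo $1$ supplies this. The remaining bookkeeping then goes through as you describe: bound $|E_r-\tilde E_r|$ to get \eqref{eq:osc-xcoord}, and take $M\gg K$ to get \eqref{eq:osc-ycoord}.
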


To prove the above lemma, we will divide both sides of \eqref{eq:nalpha} by $\alpha_r$, and then use Kronecker's theorem to produce $A$ and $n_r$.  We first give a useful description of $1/\alpha_r$, a simple exercise in quadratic field arithmetic.    

\begin{lemma}\label{lem:quad-arith} 
  For each integer $r$, we have
  \begin{align*}
    \theta_r:= \frac{1}{\alpha_r} = a_r + b_r \rho,
  \end{align*}
  where $a_r,b_r$ are rational numbers.  Furthermore, $b_r\alpha_r = b_r/\theta_r = \frac{-r}{1+r\rho'}$.  
\end{lemma}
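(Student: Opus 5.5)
The plan is to rationalize the denominator using the Galois conjugate. Since $\rho$ is a root of $x^2-3x+1$, the field $\mathbb{Q}(\rho)=\mathbb{Q}(\sqrt5)$ is a quadratic field with basis $1,\rho$. The nontrivial automorphism $\sigma$ sends $\rho\mapsto\rho'$, and $\alpha_r=1+r\rho$ has conjugate $\alpha_r' = 1+r\rho'$. Its norm is
\[
N_r:=\alpha_r\alpha_r' = (1+r\rho)(1+r\rho') = 1 + r(\rho+\rho') + r^2\rho\rho' = 1+3r+r^2.
\]
This is a nonzero integer for every integer $r$, because $r^2+3r+1=0$ has irrational roots. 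Hence
\[
\theta_r=\frac{1}{\alpha_r} = \frac{\alpha_r'}{N_r} = \frac{1+r\rho'}{N_r}.
\]
Substituting $\rho' = 3-\rho$ gives $1+r\rho' = (1+3r) - r\rho$. So $a_r = \frac{1+3r}{N_r}$ and $b_r = \frac{-r}{N_r}$, both rational. This proves the first claim.

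For the second claim, I will compute $b_r\alpha_r = \frac{-r}{N_r}\alpha_r = \frac{-r\,\alpha_r}{\alpha_r\alpha_r'} = \frac{-r}{\alpha_r'} = \frac{-r}{1+r\rho'}$. Since $\theta_r=1/\alpha_r$, this also equals $b_r/\theta_r$.

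The only point needing care is that $N_r\neq 0$ and that $1+r\rho'\neq0$; the latter follows since $\rho'$ is irrational. I will also note that $a_r,b_r$ are uniquely determined, because $1,\rho$ are linearly independent over $\mathbb{Q}$. There is no real obstacle; the computation is routine quadratic field arithmetic.
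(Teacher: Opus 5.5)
Your proposal is correct and follows the paper's route. You rationalize with the conjugate $1+r\rho'$, use $\rho+\rho'=3$ and $\rho\rho'=1$ to get the norm $1+3r+r^2$, substitute $\rho'=3-\rho$, and cancel $\alpha_r$ against the norm for the second identity; your checks that $1+3r+r^2\neq 0$ and $1+r\rho'\neq 0$ for integer $r$ are details the paper leaves implicit.
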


\begin{proof}
  We have
  \begin{align*}
    \frac{1}{\alpha_r} &= \frac{1}{1+r\rho} = \frac{1+r\rho'}{(1+r\rho)(1+r\rho')} =\frac{1+3r-r\rho}{1+3r+r^2}   \\
                       &  = \frac{1+3r}{1+3r+r^2}   + \frac{-r}{1+3r+r^2}   \rho,
  \end{align*}
  so we take $a_r= \frac{1+3r}{1+3r+r^2} $ and $b_r = \frac{-r}{1+3r+r^2} $, which are both rational numbers.  Furthermore,
  \begin{align*}
    b_r\alpha_r =  \frac{-r}{1+3r+r^2} (1+r\rho) = \frac{-r}{(1+r\rho)(1+r\rho')} (1+r\rho) = \frac{-r}{1+r\rho'}.
  \end{align*}
  
\end{proof}

\begin{proof}[Proof of Lemma \ref{lemma:dioph-approx}]
%  [[Maybe pick parameters here: R]
  
  Choose $M>2$ such that $M> 16\rho K$.  Choose $R>2400\rho K^2$.  
  
  Rearranging \eqref{eq:nalpha}, we see that we want $A$ such that 
  \begin{align}\label{eq:thetaA}
    \theta_r A \equiv \frac{\theta_r}{M} (r-E_r) \mod 1
  \end{align}
  for all $r\in I$.  Now pick $Q$ a large integer that is divisible by all the denominators of the $a_r$ and $b_r$ (for $r\in I$).  By Lemma \ref{lem:quad-arith},  $\theta_r A = (a_r+b_r \rho) A$,  and so if we take $A=Qt$, where $t$ is an integer (to be chosen later), we get
    \begin{align}
    \theta_r A = (Qa_r+ Qb_r \rho) t  \equiv Qb_r \rho t \mod 1.  \label{eq:thetarA}
  \end{align}

So substituting this into \eqref{eq:thetaA}, we see that we need to find $t$ that solves the congruences
\begin{align}
  Qb_r \rho t \equiv  \frac{\theta_r}{M} (r-E_r) \mod 1,    \label{eq:qbrt}
\end{align}
for all $r\in I$, for an $E_r$ that has small oscillation over $I$.  The idea is that if the right hand side of the above only depended on $r$ through a factor of $b_r$, we could cancel it from both sides, and then we would only need to solve a single congruence.  %This will not be possible on the nose, but will be true up to some small error.

To this end, we will pick a suitable $C$ and define $\tilde E_r$ by
\begin{align}
  \label{eq:cbr}
  -Cb_r = \theta_r(r-\tilde E_r).
\end{align}
  This will allow us to cancel as suggested.  We will then \emph{approximately} solve \eqref{eq:thetaA} with this $\tilde E_r$, and then our choice of $E_r$ will absorb the small approximation error.   We need to arrange that our $\tilde E_r$ has small oscillation (and then $E_r$ will too). In order to get this, we will pick $C$ such that the derivative $\frac{d}{dr} \tilde E_r$ is small.  To this end, we write $\tilde E_r$ in terms of $C$, giving $\tilde E_r =r+ Cb_r/\theta_r = r - \frac{Cr}{1+r\rho'}$, where the second equality is from Lemma \ref{lem:quad-arith}.  Then we get 
\begin{align}
  \label{eq:deriv}
  \frac{d}{dr} \tilde E_r = 1 - \frac{C}{(1+r\rho')^2}.  
\end{align}
So we take $C:= (1+R\rho')^2$, which makes $\frac{d}{dr} \tilde E_r  $ equal to $0$ at $r=R$, the beginning of our block.  To estimate it on all of $I$, we observe that, with our chosen value of $C$, we have the coarse bound $\left|\frac{d^2}{dr^2} \tilde E_r \right| < 100 R^2/r^3 \le 100/R$. %since our $R$ is large relative to the size $K$ of the block (here, and in the following, the implicit constant in $O(\cdot)$ is absolute).
Integrating, and using \eqref{eq:deriv}, we get, for each $r\in I$, that  $\left|\frac{d}{dr} \tilde E_r \right|  \le 100K/R$.  Then integrating again, we find
\begin{align}
  \label{eq:osc-tilde}
  \osc_I \tilde E_r \le  100  K^2/R  <  \frac{1}{24\rho},
\end{align}
where to get the last inequality we have used our choice of $R$ from the beginning of the proof.  

We now turn to approximately solving \eqref{eq:qbrt}, with $E_r$ replaced by $\tilde E_r$.  Using \eqref{eq:cbr}, we find that we need to solve
\begin{align*}
  Qb_r\rho t \equiv \frac{-Cb_r}{M} \mod 1
\end{align*}
i.e.
\begin{align}
  \label{eq:mod1}
  b_r(Q t \rho + C/M) \equiv 0 \mod 1.
\end{align}
This we can approximately solve using Kronecker's theorem: since $\{t\rho\}_{t\in \Z^+}$ is dense mod $1$, we can find integers $t, \ell$ such that
\begin{align*}
  \left| t \rho + \frac{C}{QM} - \ell \right| < \left| \frac{1}{Qb_r (24\rho M(1+2R\rho))} \right|
\end{align*}
for each $r\in I$ (we are using that $I$ is finite). Then multiplying through by $|Qb_r|$ gives
\begin{align}
  \left| Qb_r t\rho + \frac{Cb_r}{M} - Qb_r \ell\right| < \frac{1}{24\rho M(1+2R\rho)}. %\label{eq:abs-val}
\end{align}
Since $Qb_r \ell$ is an integer, we have solved \eqref{eq:mod1} up to error $1/(24\rho M(1+2R\rho))$, for all $r\in I$.   

Now we unwind everything to complete the proof.  %We have just shown that, taking $n_r=Qb_r\ell$ for $r\in I$, we have
From the previous inequality, and then using \eqref{eq:thetarA}, \eqref{eq:cbr},
we find that there are integers $n'_r$ and $n_r$ such that 
\begin{align*}
  \frac{1}{24\rho M(1+2R\rho)} &> \left| n'_r - \left(Qb_rt\rho  + \frac{Cb_r}{M}\right) \right| \\
  &= \left| n_r - \left(\theta_r A + \frac{-\theta_r(r-\tilde E_r)}{M}\right) \right|. 
\end{align*}
Multiplying by $\alpha_r = 1/\theta_r$ gives
\begin{align*}
  \frac{\alpha_r}{24\rho M(1+2R\rho)} > \left| n_r\alpha_r - \left(A - \frac{r}{M} + \frac{\tilde E_r}{M}\right) \right|. 
\end{align*}
So for these $r, n_r$, we get from \eqref{eq:nalpha}, that
\begin{align}
  \label{eq:erdiff}
  |E_r- \tilde E_r| <\frac{M\alpha_r}{24\rho M(1+2R\rho)} < \frac{1}{24\rho},
\end{align}
where in the last inequality, we have used that $\alpha_r = 1 + r\rho$, which for $r\in I$ is less than  $1+ 2R\rho$, since $R$ is greater than $2K$.

Combining \eqref{eq:erdiff} with \eqref{eq:osc-tilde} gives 
\begin{align*}
  \osc_I E_r \le \osc_I \tilde E_r + 2\sup_{r\in I} |E_r - \tilde E_r| <  \frac{1}{24\rho} + 2\left(\frac{1}{24\rho}\right)  = \frac{1}{8\rho},
\end{align*}
which establishes the desired \eqref{eq:osc-xcoord}.  It is then easy to deduce the desired \eqref{eq:osc-ycoord}  as well: notice that, by our choice of $M$ from the beginning of the proof, we have $\osc_I (-r/M) \le K/M < \frac{1}{16\rho}$ and $\osc _I (E_r/M) \le \osc_I(E_r)/M < \frac{1}{16\rho},$ hence
\begin{align*}
  \osc_I \left(-\frac r M + \frac{ E_r}{M}\right) < \frac{1}{16\rho} + \frac{1}{16\rho} = \frac{1}{8\rho}.
\end{align*}

\end{proof}

\begin{proof}[Proof of Theorem \ref{thm:goldenLbunch}]
It is easy to check that $U,L$ are in the Veech group of the golden L, i.e. the matrices stabilize the surface under the action of $SL_2(\R)$ on the ambient stratum.  And $e_1$ is a period of a saddle connection (and in fact also a cylinder), e.g. the one at bottom left of Figure \ref{fig:goldenL}.  The whole orbit of $e_1$ under the Veech group consists of saddle connection periods.  So our theorem follows from Proposition \ref{prop:bunch}.  
\end{proof}

{\footnotesize
\bibliographystyle{amsalpha}
  \bibliography{sources}%{}

@incollection{masur88,
  author    = {Howard Masur},
  title     = {Lower Bounds for the Number of Saddle Connections and Closed Trajectories of a Quadratic Differential},
  booktitle = {Holomorphic Functions and Moduli I},
  series     = {Mathematical Sciences Research Institute Publications},
  volume      = {10},
  pages      = {215--228},
  publisher  = {Springer},
  address    = {New York},
  year       = {1988},
  doi        = {10.1007/978-1-4613-9602-4_20}
}

@article{masur90,
  author  = {Howard Masur},
  title   = {The Growth Rate of Trajectories of a Quadratic Differential},
  journal = {Ergodic Theory and Dynamical Systems},
  volume  = {10},
  number  = {1},
  pages   = {151--176},
  year    = {1990},
  doi     = {10.1017/S0143385700005459}
}

@article {wu16,
    AUTHOR = {Wu, Chenxi},
     TITLE = {Delon\'e{} property of the holonomy vectors of translation
              surfaces},
   JOURNAL = {Israel J. Math.},
  FJOURNAL = {Israel Journal of Mathematics},
    VOLUME = {214},
      YEAR = {2016},
    NUMBER = {2},
     PAGES = {733--740},
      ISSN = {0021-2172,1565-8511},
   MRCLASS = {37D40 (37A45)},
  MRNUMBER = {3544700},
MRREVIEWER = {Thomas\ Ward},
       DOI = {10.1007/s11856-016-1357-y},
       URL = {https://doi.org/10.1007/s11856-016-1357-y},
}

@ARTICLE{bashan24,
    AUTHOR = {Bashan, Sahar},
     TITLE = {On nonuniformly discrete orbits},
   JOURNAL = {Comb. Number Theory},
  FJOURNAL = {Combinatorics and Number Theory},
    VOLUME = 14,
      YEAR = 2025,
    NUMBER = {3-4},
     PAGES = {271--280},
      ISSN = {2996-2196,2996-220X},
   MRCLASS = {37A17 (06B25 11J70)},
  MRNUMBER = 4989187,
       DOI = {10.2140/cnt.2025.14.271},
       URL = {https://doi.org/10.2140/cnt.2025.14.271},
}
}

\end{document}